%
%
%


\documentclass{amsart}
\usepackage{times, pdfsync}

\usepackage{amsthm}
\usepackage{amssymb}
\usepackage{amsmath}
\usepackage{amscd}

%
%
%
%



\newcommand{\bg}{\begin{equation}}
\newcommand{\ed}{\end{equation}}
\newcommand{\bga}{\begin{eqnarray}}
\newcommand{\eda}{\end{eqnarray}}

\def\cbdu{\par{\raggedleft$\Box$\par}}

\newtheorem {Theorem}  {Theorem}

\numberwithin{Theorem}{section}

\newtheorem {Lemma}[Theorem]  {Lemma}

\theoremstyle{definition}

\theoremstyle{remark}
\newtheorem{Remark}[Theorem]{\bf Remark}

\newtheorem {Corollary}[Theorem]{\bf Corollary}

\newcounter{remark}
\setcounter{remark}{0}


\newcommand{\n}{\nabla}

\renewcommand{\a}{\alpha}

\renewcommand{\div}{\mbox{div}}

\newcommand{\Bb}{{\mathcal B}}

\def  \T   {{\mathbb T}}

\def  \12  {{\frac{1}{2}}}



\def\build#1_#2^#3{\mathrel{\mathop{\kern 0pt#1}\limits_{#2}^{#3}}}

 \begin{document}

\title[Norm Inflation in Besov Spaces ]{Norm inflation for generalized magneto-hydrodynamic system}


\author [Alexey Cheskidov]{Alexey Cheskidov}
\address{Department of Mathematics, Stat. and Comp.Sci.,  University of Illinois Chicago, Chicago, IL 60607,USA}
\email{acheskid@math.uic.edu} 
\author [Mimi Dai]{Mimi Dai}
\address{Department of Mathematics, Stat. and Comp.Sci.,  University of Illinois Chicago, Chicago, IL 60607,USA}
\email{mdai@uic.edu} 

\thanks{The work of Alexey Cheskidov was partially supported by NSF Grant
DMS--1108864.}





\begin{abstract}
We consider the  three-dimensional  incompressible magneto-hydrodynamic system with fractional powers of the Laplacian.
We discover a wide range of spaces where the norm inflation occurs and hence
small initial data results are out of reach. The norm inflation
occurs not only in scaling invariant (critical) spaces, but also in supercritical, and, surprisingly, subcritical ones.

\bigskip

KEY WORDS: Generalized magneto-hydrodynamic system; norm inflation;
Besov spaces; interactions of plane waves.

\hspace{0.02cm}CLASSIFICATION CODE: 76D03, 35Q35.
\end{abstract}

\maketitle

\section{Introduction}

In this paper we study the three dimensional incompressible magneto-hydrodynamic system
with fractional powers of the Laplacian:
\begin{equation}\label{FMHD}
\begin{split}
&u_t +(u\cdot\n) u-(b\cdot\n) b+\n p=-\nu(-\Delta)^{\a_1} u,\\
&b_t +(u\cdot\n) b-(b\cdot\n) u=-\mu(-\Delta)^{\a_2} b,\\
& u(x, 0)=u_0, \ \ \ b(x, 0)=b_0 
\end{split}
\end{equation}
where $x\in\mathbb{R}^3$, $t\geq 0$, $u$ represents the fluid velocity, $p$
is the pressure, and $b$
is the magnetic field. The parameter $\nu$ denotes the kinematic viscosity coefficient of the fluid, $\mu$ denotes the reciprocal of the magnetic Reynolds number,  and $u_0$, $b_0$ are given divergence free vector fields in $L^2$.  In the case $\a_1=\a_2=1$, equations \eqref{FMHD} reduce to the classical magneto-hydrodynamic system.
This system has attracted a considerable attention, see \cite{CW, DQS, ST, Wu03, Wu11, Wu08} and references therein.

Solutions to the generalized magneto-hydrodynamic system (\ref{FMHD}) are scaling invariant when $\a_1=\a_2=\a>0$. In this case
\begin{align}\notag
u_\lambda(x,t) &=\lambda^{2\a-1} u(\lambda x, \lambda^{2\a}t), \ p_\lambda(x,t) =\lambda^{2(2\a-1)}p(\lambda x, \lambda^{2\a}t) \\
\ b_\lambda(x,t) &=\lambda^{2\a-1} b(\lambda x, \lambda^{2\a}t) \notag
\end{align}
solves (\ref{FMHD}) with the initial data
\begin{equation}
u_{0\lambda} =\lambda^{2\a-1} u_0(\lambda x), \ b_{0\lambda} =\lambda^{2\a-1} b_0(\lambda x), \notag
\end{equation}
provided $(u(x, t), p(x,t), b(x,t))$ satisfies (\ref{FMHD}) with the initial data $(u_0(x), b_0(x))$.
Spaces  invariant under such scaling are critical.  When $\a_1=\a_2=\a$,
the largest critical space for equations (\ref{FMHD})  is $\dot B_{\infty,\infty}^{1-2\a}$ (see \cite{Ca}).

In the case $b=0$, equations (\ref{FMHD}) reduce to the generalized Navier-Stokes system, which has been studied extensively.
Since the global regularity is only known for $\a_1 \geq 5/4$, the question of global well-posedness in various critical
spaces is of great interest. In the classical case $\a_1 = 1$ the best small initial data result is due to Koch and Tataru. In
\cite{KT}, they established the global well-posedness of the Navier-Stokes equations with small initial data in $BMO^{-1}$.
In the case $\a_1 \in (1/2, 1)$ the situation is better as Yu and Zhai \cite{YZ}
showed global well-posedness for small initial data in the largest critical space $\dot B_{\infty,\infty}^{1-2\a_1}$, which is out of reach for $\a_1 \in [1, 5/4)$. Obstacles here are illustrated by
some illposedeness results: Bourgain and Pavlovi\'{c} \cite{BP} showed the
norm inflation for the classical Navier-Stokes equations in $\dot{B}^{-1}_{\infty,\infty}$ in case $\a_1=1$,  
Cheskidov and Shvydkoy \cite{CS, CSnew} proved the
existence of discontinuous Leray-Hopf solutions of the Navier-Stokes equations in  $\dot{B}^{1-2\a_1}_{\infty,\infty}$ with arbitrarily small initial data for $\a_1 \in [1, 5/4)$, and Yoneda \cite{Yo} showed the norm inflation for the classical Navier-Stokes equation in a generalized Besov space near $BMO^{-1}$.

Recently, Cheskidov and Dai \cite{CD} showed the norm inflation in subcrtitical spaces $\dot{B}^{-s}_{\infty,\infty}$ for all
$s \geq \a_1$, $\a_1 \geq 1$. This provides a wide range of spaces where a small initial data result is not expected. Note that the natural space for the norm inflation
$B^{-\a_1}_{\infty,\infty}$ is only scailing invariant in the classical case $\a_1=1$, and
is subcritical for $\a_1>1$. This explains why small initial data results are only available for $\a_1<1$.

The goal of this paper is to find natural norm inflation spaces for the generalized MHD system \eqref{FMHD} and show that in general they are not scailing invariant, even in the classical case.
Namely, we prove that
\begin{Theorem}\label{Mthm}
Let $\a_1,\a_2 \geq 1$. Assume $\theta_1+\theta_2=2\a_2$  for $\theta_1, \theta_2>0$. For any $\delta >0$ there exists
a smooth space-periodic solution $(u(t),b(t))$ of (\ref{FMHD}) with period $2\pi$, such that
\begin{equation}\notag
\|u(0)\|_{\dot{B}^{-\theta_1}_{\infty, \infty}}+\|b(0)\|_{\dot{B}^{-\theta_2}_{\infty, \infty}} \lesssim \delta, 
\end{equation}
and
\begin{equation}\notag
\|b(T)\|_{\dot{B}^{-s}_{\infty, \infty}}\gtrsim \frac{1}{\delta},
\end{equation}
for some $0<T<\delta$ and all $s>0$.
\end{Theorem}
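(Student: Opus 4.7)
The plan is to adapt the Bourgain--Pavlovi\'c plane-wave construction to the MHD setting, placing the inflation-generating interaction inside the $b$-equation. Fix a small vector $k_* \in \Z^3$ and choose a large integer parameter $q$ along with $N$ well-separated unit directions $e_s$ and high frequencies $q_s \gg q$. Take divergence-free polarization vectors $\eta_s \perp k^u_s$, $\xi_s \perp k^b_s$, where $k^u_s := q_s e_s + \tfrac12 k_*$ and $k^b_s := q_s e_s - \tfrac12 k_*$, so that $k^u_s - k^b_s = k_*$. The initial data is
\begin{equation*}
u_0 = \d \sum_{s=1}^N |k^u_s|^{\th_1}\cos(k^u_s \cdot x)\,\eta_s, \qquad b_0 = \d \sum_{s=1}^N |k^b_s|^{\th_2}\cos(k^b_s \cdot x)\,\xi_s.
\end{equation*}
Since the supports of the Littlewood--Paley projections of these modes are essentially disjoint for widely separated $q_s$, the triangle inequality in $\dot{B}^{-\th_i}_{\infty,\infty}$ collapses and both norms are of size $\lesssim \d$, independently of $N$.

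Linearize by setting $u_1(t) = e^{-\nu t(-\Delta)^{\a_1}} u_0$ and $b_1(t) = e^{-\mu t(-\Delta)^{\a_2}} b_0$, and isolate the main inflation term
\begin{equation*}
b_2(t) = -\int_0^t e^{-\mu(t-s)(-\Delta)^{\a_2}} \mathbf{P}\, \n \cdot \bigl(u_1 \otimes b_1 - b_1 \otimes u_1\bigr)(s)\dds.
\end{equation*}
For each index $s$ the cross-mode $k^u_s - k^b_s = k_*$ contributes to the $k_*$-Fourier coefficient of $b_2(T)$ the quantity
\begin{equation*}
C_s \,\d^2\, |k^u_s|^{\th_1}|k^b_s|^{\th_2}\, \int_0^T e^{-\mu(T-s')|k_*|^{2\a_2}} e^{-s'\bigl(\nu|k^u_s|^{2\a_1}+\mu|k^b_s|^{2\a_2}\bigr)}\dds',
\end{equation*}
where the geometric constant $C_s = (\xi_s \otimes \eta_s - \eta_s \otimes \xi_s)k_*$ is of order one. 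Choosing $T \gg q_s^{-2\a_2}$ but with $T|k_*|^{2\a_2} \ll 1$ reduces the integral to $\approx 1/(\nu |k^u_s|^{2\a_1}+\mu|k^b_s|^{2\a_2})$. Under the balance $\th_1+\th_2=2\a_2$ and the appropriate normalization of $q_s$, every summand contributes a term of order $\d^2$ with a common sign, giving
\begin{equation*}
\bigl|\widehat{b_2(T)}(k_*)\bigr| \gtrsim N\d^2.
\end{equation*}

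The remainder $(U,B):=(u-u_1-u_2,\, b-b_1-b_2)$ satisfies a perturbed MHD system with a forcing made of bilinear terms involving at least one factor of $u_2$ or $b_2$. Running a fixed-point estimate in a norm tailored to the hyperdissipative semigroups, one bounds its $\dot{B}^{-s}_{\infty,\infty}$ size by $(N\d^2)^2$ times harmless factors, together with stray terms from $u_2, b_2$ at frequencies other than $k_*$ which, by the dispersed choice of the $q_s e_s$, do not resonate into the $k_*$ mode. Picking $N \sim \d^{-3}$ then yields
\begin{equation*}
\|b(T)\|_{\dot{B}^{-s}_{\infty,\infty}} \geq |k_*|^{-s}\bigl|\widehat{b(T)}(k_*)\bigr| \gtrsim N\d^2 \gtrsim \d^{-1},
\end{equation*}
while $T$ can be taken below $\d$ by enlarging $q$.

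The main obstacle is the control of the remainder: the absence of any useful dissipation at the low frequency $k_*$ (which is why the inflation happens there in the first place) forces every error estimate to be spent either on the high-frequency factors, via their heat decay, or on the shortness of the time interval $T$. In particular, quadratic self-interactions like $(u_1 \cdot \n)u_2$ could potentially redeposit at $k_*$ through triple-frequency resonances, and ruling this out requires the $q_s e_s$ to be chosen so that no three-wave resonance hits $k_*$, in the spirit of the combinatorial constraints used by Bourgain--Pavlovi\'c and by Cheskidov--Dai \cite{CD}.
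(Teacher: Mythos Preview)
Your sketch follows the Bourgain--Pavlovi\'c template but has two concrete gaps. First, with the honest linear flows $e^{-\nu t(-\Delta)^{\alpha_1}}u_0$ and $e^{-\mu t(-\Delta)^{\alpha_2}}b_0$, the $s$-th contribution to $\widehat{b_2(T)}(k_*)$ is of size $\delta^2 q_s^{\theta_1+\theta_2}/(q_s^{2\alpha_1}+q_s^{2\alpha_2}) = \delta^2 q_s^{2\alpha_2}/(q_s^{2\alpha_1}+q_s^{2\alpha_2})$; when $\alpha_1>\alpha_2$ this decays like $q_s^{-2(\alpha_1-\alpha_2)}$, so the summands are \emph{not} uniformly of order $\delta^2$ and the claimed lower bound $N\delta^2$ fails. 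The paper handles this by defining the second $b$-iterate $b_1$ with the $\alpha_2$-semigroup applied to \emph{both} $u_0$ and $b_0$, so that the balance $\theta_1+\theta_2=2\alpha_2$ gives an order-one contribution per mode regardless of $\alpha_1$. Second, your remainder control invokes ``a fixed-point estimate in a norm tailored to the hyperdissipative semigroups,'' but the paper explicitly points out that Koch--Tataru-type bilinear continuity is \emph{not available} for the fractional kernel; instead it bounds the remainders $y,z$ directly in $L^\infty$ via $\|\mathcal{B}_\alpha(u,v)(t)\|_\infty \lesssim \int_0^t (t-\tau)^{-1/2\alpha}\|u(\tau)\|_\infty\|v(\tau)\|_\infty\,d\tau$ combined with explicit pointwise bounds on the linear flows and on $b_1$ (Lemmas~4.3--4.6). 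That is where the real parameter-balancing happens, and your asserted $(N\delta^2)^2$ bound on the remainder is not justified by anything in the sketch.

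The paper's construction is also structurally different in a way that matters for the remainder: all high wave vectors $k_i$ are taken \emph{parallel} to a single axis $\zeta=(1,0,0)$, with $k_i'=k_i+\eta$ and fixed polarizations $v=(0,0,1)$, $v'=(0,1,0)$. This forces $(e^{-t(-\Delta)^{\alpha}}u_0\cdot\nabla)e^{-t(-\Delta)^{\alpha}}u_0$, $(e^{-t(-\Delta)^{\alpha}}b_0\cdot\nabla)e^{-t(-\Delta)^{\alpha}}b_0$ and $(e^{-t(-\Delta)^{\alpha}}b_0\cdot\nabla)e^{-t(-\Delta)^{\alpha}}u_0$ to vanish identically, so the second $u$-iterate is zero and $b_1$ reduces to a single bilinear term; no three-wave non-resonance combinatorics are needed at all. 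The amplitude scaling is done with prefactors $r^{-\beta_1},r^{-\beta_2}$ with $\beta_1,\beta_2$ just below $1/2$ (so that the inflation size is $r^{1-\beta_1-\beta_2}$), rather than your $\delta$, $N\sim\delta^{-3}$ bookkeeping.
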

Since for periodic functions the homogeneous and non-homogeneous Besov norms are equivalent, it follows that
\[
\|b(0)\|_{\dot{B}^{-s}_{\infty, \infty}} \lesssim \|b(0)\|_{B^{-s}_{\infty, \infty}}  \lesssim
\|b(0)\|_{\dot{B}^{-\theta_2}_{\infty, \infty}} \lesssim \delta  \qquad \forall s \geq \theta_2.
\]  
This means that the norm inflation for $b$ occurs in all the spaces $\dot{B}^{-s}_{\infty, \infty}$, $s >0$, which is reflected in this corollary.
\begin{Corollary}\label{cor}
Let $\a_1, \a_2 \geq 1$. For any  $s >0$, $\delta >0$, and positive $\theta$ with $\theta \geq 2\a_2-s$, there exists
a smooth space-periodic solution $(u(t),b(t))$ of (\ref{FMHD}), such that
\begin{equation}\notag
\|u(0)\|_{\dot{B}^{-\theta}_{\infty, \infty}} + \|b(0)\|_{\dot{B}^{-s}_{\infty, \infty}} \lesssim \delta,
\end{equation}
and
\begin{equation}\notag
\|b(T)\|_{\dot{B}^{-s}_{\infty, \infty}}\gtrsim \frac{1}{\delta},
\end{equation}
for some $0<T<\delta$.
\end{Corollary}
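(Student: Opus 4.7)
The plan is to deduce Corollary \ref{cor} directly from Theorem \ref{Mthm} by choosing the indices $\theta_1,\theta_2$ appropriately and then using the monotonicity of periodic Besov seminorms with respect to the regularity index. The essential ingredient, already flagged in the paragraph preceding the corollary, is that for a smooth $2\pi$-periodic function $f$ one has the embedding
\[
\|f\|_{\dot{B}^{-r_1}_{\infty,\infty}} \lesssim \|f\|_{\dot{B}^{-r_2}_{\infty,\infty}}, \qquad r_1 \geq r_2 > 0,
\]
obtained by passing through the inhomogeneous Besov norms, which are equivalent to the homogeneous ones in the periodic setting, and using the standard inhomogeneous embedding. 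This monotonicity is the only fact about Besov spaces that the reduction requires.

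The second step is to select $\theta_1,\theta_2>0$ with $\theta_1+\theta_2=2\alpha_2$, $\theta_1\le\theta$, and $\theta_2\le s$. Once such a pair exists, Theorem \ref{Mthm} produces a smooth periodic solution $(u,b)$ satisfying $\|u(0)\|_{\dot{B}^{-\theta_1}_{\infty,\infty}}+\|b(0)\|_{\dot{B}^{-\theta_2}_{\infty,\infty}}\lesssim\delta$ and $\|b(T)\|_{\dot{B}^{-s}_{\infty,\infty}}\gtrsim 1/\delta$ for some $T<\delta$; applying the embedding above with $(r_1,r_2)=(\theta,\theta_1)$ to $u(0)$ and with $(r_1,r_2)=(s,\theta_2)$ to $b(0)$ then gives exactly the bounds of the corollary, while the lower bound on $\|b(T)\|_{\dot{B}^{-s}_{\infty,\infty}}$ is furnished verbatim by the theorem (valid for any positive index).

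The only real content, and what I would check most carefully, is the feasibility of the choice of $(\theta_1,\theta_2)$. Writing $\theta_2=2\alpha_2-\theta_1$, the constraints become $0<\theta_1<2\alpha_2$, $\theta_1\le\theta$, and $\theta_1\ge 2\alpha_2-s$. If $s\ge 2\alpha_2$ the last condition is void and I take any $\theta_1\in(0,\min(\theta,2\alpha_2))$, which is nonempty because $\theta>0$. If $s<2\alpha_2$, the hypothesis $\theta\ge 2\alpha_2-s$ ensures that the interval $[2\alpha_2-s,\min(\theta,2\alpha_2))$ is nonempty (it contains $2\alpha_2-s$ itself, which is strictly less than $2\alpha_2$ because $s>0$). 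In either case such $\theta_1$ exists, completing the reduction. There is no genuine obstacle here beyond this short case analysis; the corollary is essentially a bookkeeping consequence of Theorem \ref{Mthm} combined with the trivial direction of the Besov embedding scale on the torus.
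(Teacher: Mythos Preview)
Your argument is correct and follows the same route as the paper: apply Theorem~\ref{Mthm} with $\theta_1+\theta_2=2\alpha_2$, $\theta_1\le\theta$, $\theta_2\le s$, and then use the periodic Besov embedding (exactly the chain displayed in the paragraph preceding the corollary) to pass from the indices $-\theta_1,-\theta_2$ to $-\theta,-s$. One trivial slip: in the case $s<2\alpha_2$ your interval $[2\alpha_2-s,\min(\theta,2\alpha_2))$ should allow $\theta_1=\theta$ (the constraint is $\theta_1\le\theta$, not $\theta_1<\theta$), so that the choice $\theta_1=2\alpha_2-s$ is admissible even in the borderline case $\theta=2\alpha_2-s$.
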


In a previous work of Dai, Qing and Schonbek \cite{DQS}, the authors obtained norm inflations for the classical MHD system, i.e., $\a_1=\a_2=1$ in (\ref{FMHD}). More precisely, the main result reads that initial data can be constructed such that
\[\|u(0)\|_{\dot{B}^{-1}_{\infty, \infty}} + \|b(0)\|_{\dot{B}^{-1}_{\infty, \infty}} \lesssim \delta, \] 
while 
\[\|b(T)\|_{\dot{B}^{-1}_{\infty, \infty}}\gtrsim \frac{1}{\delta}, \ \qquad \mbox { for some } 0<T<\delta.\]
One can see the above result is a particular case of Corollary \ref{cor} corresponding to $\a_1=\a_2=\theta=s=1$. In fact, taking $\a_1=\a_2=1$, Corollary \ref{cor} guarantees that the magnetic field $b$ develops norm inflation in all the spaces $\dot{B}^{-s}_{\infty, \infty}$, $s>0$,
not only the critical one $\dot{B}^{1-2\a_1}_{\infty, \infty}$.

We point out another difference with \cite{DQS}.  For the classical MHD system, the
proof of the norm inflation is based on the continuity of the bilinear operator (see (2.7) in \cite{DQS}) associated with the heat kernel on certain Koch-Tataru space to control the nonlinear interactions in the error terms.  For the bilinear operator corresponding to the fractional heat kernel, as discussed in \cite{CD}, the continuity property on a modified Koch-Tataru adapted space is not available.
Instead, the bilinear operator is estimated in $L^\infty$ in order to control the error terms in Subsection \ref{sec:yz}.

The rest of the paper is structured in the following way. In Section \ref{sec:pre} we introduce notations  and present some auxiliary results; in Section \ref{sec:interaction} we describe how the diffusions of plane waves interact; Theorem \ref{Mthm} is proved in Section \ref{sec:proof}.

\bigskip

\section{Preliminaries}
\label{sec:pre}

\subsection{Notations}
We denote by $A\lesssim B$ an estimate of the form $A\leq C B$ with
some absolute constant $C$, and by $A\sim B$ an estimate of the form $C_1
B\leq A\leq C_2 B$ with some absolute constants $C_1$, $C_2$.
We also write $\|\cdot\|_{p}=\|\cdot\|_{L^p}$.

\medskip

\subsection{Besov spaces}
Norms in the homogeneous and non-homogeneous Besov spaces 
$\dot{B}_{\infty, \infty}^{-s}$ and $B_{\infty, \infty}^{-s}$ can be defined as follows (see \cite{L}):
\begin{align}\notag
&\|f\|_{\dot{B}_{\infty, \infty}^{-s}}=\sup_{t>0} t^{\frac{s}{2\a}}
\|e^{-t(-\Delta)^\a}f\|_{L^\infty},\\
&\|f\|_{B_{\infty, \infty}^{-s}} = \sup_{0<t<1}t^{\frac{s}{2\a}}
\|e^{-t(-\Delta)^\a}f\|_{L^\infty}\notag.
\end{align}

The homogeneous and non-homogeneous norms are equivalent for periodic functions (see \cite{SchT}). 
It is then easy to observe that
\bg\label{ineq:imbed}
\|f\|_{\dot B_{\infty, \infty}^{-s}(\T^3)} \leq \|f\|_{L^\infty(\T^3)},
\ed
due to the fact that $\|e^{-t(-\Delta)^{\a}}f\|_{L^\infty} \leq \|f\|_{L^\infty}$.

\medskip

\subsection{Bilinear operator}
Let $\mathbb{P}$ denote the projection on divergence-free vector
fields, which acts on a function $\phi$ as
\begin{equation}\notag
\mathbb{P}(\phi)=\phi+\n\cdot(-\triangle)^{-1}\div\phi.
\end{equation}
For $\a>1$ we consider the following bilinear operator:
\begin{equation}\label{Ba}
\Bb_\a (u, v)=\int_{0}^{t} e^{-(t-\tau)(-\Delta)^\a}
\mathbb{P}\n\cdot (u\otimes v)\, d\tau .
\end{equation}
The following estimate holds for the bilinear operator $\Bb_\a$  (see \cite{CD}). 
\begin{Lemma}\label{le:bil} Let $u,v\in L^1(0,T;L^\infty)$ be such that $u\otimes v\in L^1(0,T;L^\infty)$. Then for all $\a>0$, the bilinear operator $\Bb_\a$ satisfies 
\begin{equation}\label{Bac}
\|\Bb_\a (u, v)\|_{\infty}\lesssim \int_0^t\frac{1}{(t-\tau)^{1/{2\a}}} \|u(\tau)\|_{\infty}\|v(\tau)\|_{\infty}d\tau.
\end{equation}
\end{Lemma}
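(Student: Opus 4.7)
The plan is to realize $\Bb_\a(u,v)(t)$ as a spatial convolution and thereby reduce the lemma to a size estimate on a single kernel. Let $G_s$ denote the convolution kernel of the operator $e^{-s(-\Delta)^\a}\mathbb{P}\n$, so that
\[
\Bb_\a(u,v)(t)=\int_0^t G_{t-\tau}\ast (u\otimes v)(\tau)\,d\tau.
\]
By Young's convolution inequality together with the pointwise bound $\|(u\otimes v)(\tau)\|_\infty\le \|u(\tau)\|_\infty\|v(\tau)\|_\infty$, we get
\[
\|\Bb_\a(u,v)(t)\|_\infty \le \int_0^t \|G_{t-\tau}\|_1\,\|u(\tau)\|_\infty\,\|v(\tau)\|_\infty\,d\tau,
\]
so comparing with \eqref{Bac} reduces the lemma to the scaling estimate $\|G_s\|_1\lesssim s^{-1/(2\a)}$ for $s>0$.

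The kernel $G_s$ has Fourier symbol $\widehat{G_s}(\xi)=e^{-s|\xi|^{2\a}}\bigl(I-\xi\otimes\xi/|\xi|^2\bigr)(i\xi)$. The substitution $\eta=s^{1/(2\a)}\xi$ yields the self-similar identity
\[
G_s(x)=s^{-(n+1)/(2\a)}\,G_1\!\left(s^{-1/(2\a)}x\right),
\]
so $\|G_s\|_1=s^{-1/(2\a)}\|G_1\|_1$, and the whole estimate collapses to the single statement $G_1\in L^1(\R^n)$.

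To establish $G_1\in L^1$, I would use that $\widehat{G_1}$ is bounded on $\R^n$, smooth on $\R^n\setminus\{0\}$, and decays exponentially at infinity. A Littlewood--Paley decomposition of $\widehat{G_1}$ together with repeated integration by parts on the high-frequency pieces produces polynomial decay of $G_1$ at infinity of arbitrary order, while the boundedness of $\widehat{G_1}$ yields an $L^\infty$ bound on $G_1$; combining these places $G_1$ in $L^1(\R^n)$.

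The principal obstacle is precisely this last $L^1$ bound. The Leray projector $\mathbb{P}$ is not bounded on $L^1$ or on $L^\infty$, so the three operators $e^{-s(-\Delta)^\a}$, $\mathbb{P}$, $\n$ cannot be analysed separately at the kernel level; one is forced to treat them as a single Fourier multiplier and to exploit simultaneously the fact that the first-order derivative tempers the singularity of $\mathbb{P}$ at $\xi=0$ and that the fractional heat semigroup produces exponential decay at high frequencies.
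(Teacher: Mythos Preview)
The paper does not prove this lemma; it simply cites \cite{CD}. So there is no ``paper's proof'' to compare against, and your proposal must stand on its own.

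Your reduction via Young's inequality and scaling to the single claim $G_1\in L^1(\R^n)$ is correct and is the standard route. The gap is in your sketch of why $G_1\in L^1$. You write that a Littlewood--Paley decomposition plus integration by parts on the \emph{high}-frequency pieces yields ``polynomial decay of $G_1$ at infinity of arbitrary order.'' That is false, and it is false precisely because of the obstacle you yourself name in the last paragraph: the symbol $\widehat{G_1}(\xi)=e^{-|\xi|^{2\a}}(I-\xi\otimes\xi/|\xi|^2)(i\xi)$ is only $C^{0}$ at the origin (its first derivatives are homogeneous of degree~$0$, hence bounded but discontinuous there). This low-frequency singularity caps the spatial decay of $G_1$ at $|x|^{-n-1}$, not at any faster rate. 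High-frequency integration by parts is irrelevant here; the exponential factor $e^{-|\xi|^{2\a}}$ already makes the high-frequency contribution Schwartz.

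The Littlewood--Paley argument does work, but the point is the \emph{low}-frequency sum. For $j<0$, on the annulus $|\xi|\sim 2^{j}$ the symbol rescales to $2^{j}$ times a fixed smooth profile, so the corresponding kernel piece $K_j$ satisfies $\|K_j\|_{1}\lesssim 2^{j}$; summing over $j<0$ gives a finite contribution. The high-frequency pieces carry the factor $e^{-c\,2^{2\a j}}$ and are harmless. This is where the ``first-order derivative tempers the singularity of $\mathbb{P}$'' that you mention actually enters: without the extra factor of $\xi$, the low-frequency pieces would each have $\|K_j\|_1\sim 1$ and the sum would diverge. A minor side remark: boundedness of $\widehat{G_1}$ alone does not give $G_1\in L^\infty$; you need $\widehat{G_1}\in L^1$, which does hold here since $|\widehat{G_1}(\xi)|\lesssim |\xi|e^{-|\xi|^{2\a}}$.
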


\bigskip

\section{Interactions of plane waves}
\label{sec:interaction}

\subsection{The first iteration approximation of a mild solution}
\label{sec:rewrite}
We decompose a solution $(u,b)$ of (\ref{FMHD}) as
\begin{align}
u&=e^{-t(-\Delta)^{\a_1}}u_0-u_1+y, \label{u}\\
b&=e^{-t(-\Delta)^{\a_2}}b_0-b_1+z, \label{b}
\end{align}
where
\begin{equation} \label{u1}
\begin{split}
u_1(x, t)= &\Bb_{\a_1}(e^{-t(-\Delta)^{\a_1}}u_0(x),e^{-t(-\Delta)^{\a_1}}u_0(x))\\
&-\Bb_{\a_1}(e^{-t(-\Delta)^{\a_1}}b_0(x),e^{-t(-\Delta)^{\a_1}}b_0(x)),
\end{split}
\end{equation}
\begin{equation} \label{b1}
\begin{split}
b_1(x, t)= &\Bb_{\a_2}(e^{-t(-\Delta)^{\a_2}}u_0(x),e^{-t(-\Delta)^{\a_2}}b_0(x))\\
&-\Bb_{\a_2}(e^{-t(-\Delta)^{\a_2}}b_0(x),e^{-t(-\Delta)^{\a_2}}u_0(x)).
\end{split}
\end{equation}
It is easy to obtain that
\begin{equation} \label{y}
y(t) = -\int\limits_{0}^{t}
e^{-(t-\tau) (-\Delta)^{\a_1}} [G_0 (\tau)+G_1 (\tau)+G_2 (\tau)]d\tau,
\end{equation}
\begin{equation} \label{z}
z(t) = -\int\limits_{0}^{t}
e^{-(t-\tau) (-\Delta)^{\a_2}} [K_0 (\tau)+K_1 (\tau)+K_2 (\tau)]d\tau,
\end{equation}
where
\begin{equation}\label{eq:g}
\begin{split}
G_0=&\mathbb{P}[(e^{-t(-\Delta)^{\a_1}} u_0\cdot\n)u_1+(u_1\cdot\n)e^{-t(-\Delta)^{\a_1}} u_0+(u_1\cdot\n)u_1]\\
&-\mathbb{P}[(e^{-t(-\Delta)^{\a_1}} b_0\cdot\n)b_1+(b_1\cdot\n)e^{-t(-\Delta)^{\a_1}} b_0+(b_1\cdot\n)b_1],\\
G_1=&\mathbb{P}[(e^{-t(-\Delta)^{\a_1}} u_0\cdot\n)y+(u_1\cdot\n)y+(y\cdot\n)e^{-t(-\Delta)^{\a_1}} u_0+(y\cdot\n)u_1]\\
&-\mathbb{P}[(e^{-t(-\Delta)^{\a_1}} b_0\cdot\n)z+(b_1\cdot\n)z+(z\cdot\n)e^{-t(-\Delta)^{\a_1}} b_0+(z\cdot\n)b_1],\\
G_2=&\mathbb{P}[(y\cdot\n)y]-\mathbb{P}[(z\cdot\n)z],
\end{split}
\end{equation}
and 
\begin{equation}\label{eq:k}
\begin{split}
K_0=&\mathbb{P}[(e^{-t(-\Delta)^{\a_2}} u_0\cdot\n)b_1+(u_1\cdot\n)e^{-t(-\Delta)^{\a_2}} b_0+(u_1\cdot\n)b_1]\\
&-\mathbb{P}[(e^{-t(-\Delta)^{\a_2}} b_0\cdot\n)u_1+(b_1\cdot\n)e^{-t(-\Delta)^{\a_2}} u_0+(b_1\cdot\n)u_1],\\
K_1=&\mathbb{P}[(e^{-t(-\Delta)^{\a_2}} u_0\cdot\n)z+(u_1\cdot\n)z+(y\cdot\n)e^{-t(-\Delta)^{\a_2}} b_0+(y\cdot\n)b_1]\\
&-\mathbb{P}[(e^{-t(-\Delta)^{\a_2}} b_0\cdot\n)y+(b_1\cdot\n)y+(z\cdot\n)e^{-t(-\Delta)^{\a_2}} u_0+(z\cdot\n)u_1],\\
K_2=&\mathbb{P}[(y\cdot\n)z]-\mathbb{P}[(z\cdot\n)y].
\end{split}
\end{equation}

Note that the terms $G_0, K_0$ do not depend on $y$ or $z$, the terms $G_1, K_1$ are linear,
and the terms $G_2, K_2$ are quadratic in  $y$ and $z$.

\begin{Remark}
Note that  although the second equation in system (\ref{FMHD}) has no pressure, since $u$ and $b$ are both divergence free,  the term $u\cdot\n b-b\cdot\n u$ is automatically divergence free. Hence the projector $\mathbb{P}$ acting on this term does not change the second equation and hence we can write $b_1$ and $K_i$'s as  described above.
\end{Remark}

\subsection{Diffusion of a plane wave }
Here we discuss the diffusions of plane waves.
Let $k\in \mathbb{R}^3$, $v \in\mathbb{S}^2$ and $k \cdot v=0$. Define
\[
u_0 = v\cos(k\cdot x).
\]
Note that 
$ \n\cdot u_0 = 0$ and
\begin{equation}\label{difu}
e^{-t(-\Delta)^\a}v\cos(k\cdot x)=e^{-|k|^{2\a}t}v\cos(k\cdot x).
\end{equation}

We also remark that 
\[
\|v\cos(k\cdot x)\|_{\dot{B}^{-s}_{\infty,\infty}} \sim |k|^{-s},
\]
for $s>0$.

\subsection{Interaction of plane waves}
\label{sec:int}

The interaction between two different single plane waves, essential for the norm inflation phenomenon,  is discussed here. Let $k_i\in\mathbb{R}^3$, $v_i \in\mathbb{S}^2$ and $k_i\cdot v_i=0$, for $i=1, 2$. Define
\begin{equation}\notag
u_{1}= \cos(k_1\cdot x)v_1, \ \ u_{2}=\cos(k_2\cdot x)v_2.
\end{equation}
Assuming  that  $k_2\cdot v_1 = \frac 12$ it is easy to show that
\[
\aligned &\Bb_\a(e^{-t(- \Delta)^\a} u_{1}, e^{-t(- \Delta)^\a} u_{2}) \\
& = \frac 14  v_1\sin((k_2-k_1)\cdot x) \int_0^t
e^{-(|k_1|^{2\a}+|k_2|^{2\a})\tau}e^{-|k_2-k_1|^{2\a}(t-\tau)}d\tau \notag\\ & + \frac
14 v_1 \sin((k_1+k_2)\cdot x)\int_0^t
e^{-(|k_1|^{2\a}+|k_2|^{2\a})\tau}e^{-|k_1+k_2|^{2\a}(t-\tau)}d\tau.\endaligned
\]

So one can see that the interaction between two plane waves is small 
in $\dot{B}^{-s}_{\infty, \infty}$ provided neither the sum nor the difference
of their wave vectors is small in magnitude. On the other hand, the
interaction is sizable provided either the
sum or the difference of the corresponding wave vectors is small in magnitude.

\bigskip

\section{Proof of the main result}
\label{sec:proof}
Here, following  ideas from \cite{BP, CD}, we  construct  initial data that produce norm inflation for \eqref{FMHD}.
As we saw in the previous section, a large number of waves is needed for this phenomenon to occur. 
The constructed initial data will be smooth and  space-periodic. Moreover, we will show that the solution will
remain smooth up to the time of the norm inflation.

\subsection{Initial data}

Let
\begin{eqnarray}\label{u0}
&u_0=r^{-{\beta_1}}\sum_{i=1}^r|k_i|^{\theta_1}v \cos(k_i\cdot x),\\
&b_0=r^{-{\beta_2}}\sum_{i=1}^r|k'_i|^{\theta_2}v' \cos(k'_i\cdot x),\notag
\end{eqnarray}
where $\beta_1, \beta_2, \theta_1, \theta_2>0$ will be determined later. 
We choose

\bigskip
\noindent
{\em Wave vectors:} Let $\zeta=(1,0,0)$ and $\eta=(0,0,1)$. We take the  wave vectors $k_i\in\mathbb{Z}^3$ to be parallel to $\zeta$
with magnitudes 
\begin{align}\label{kl}
|k_i|=2^{i-1}K, \ \ \ \ i=1, 2, 3, ..., r,
\end{align}
where $K$ is a large integer that depends on $r$. We also define
\begin{equation} \label{ks-ks'}
k'_i=k_i+\eta.
\end{equation}

\bigskip
\noindent
{\em Amplitude vectors:} Define 
\bg\label{eq:v}
v=(0,0,1), \ \ \ v'=(0,1,0). 
\ed
We clearly have
\begin{equation}\notag
k_i\cdot v=k'_i\cdot v'=0,
\end{equation}
i.e., the initial data is divergence free.

The initial data construction here is not the same as the one used for the classical MHD system in \cite{DQS}. We still use the fact that the high-high interactions produce low frequency wave, which is the essential part to develop the norm inflation in Besov spaces with negative indexes. However, thanks to the coupling of the velocity field and the magnetic field, we have the flexibility of having more admissible values of $\beta_1, \beta_2, \theta_1$ and  $\theta_2$, which allows us to obtain a wider range of spaces where the system (\ref{FMHD}) develops norm inflation (see Theorem \ref{Mthm} and Corollary \ref{cor}).

We recall the following results whose proofs can be found in \cite{CD}. 

\begin{Lemma}\label{k}Let $\gamma, \theta> 0$. Assume (\ref{kl})-(\ref{eq:v}) are satisfied. Then
\begin{equation}\label{vk0}
k_i \cdot v' = 0,\ \ \ \ k'_i\cdot v =1, \quad \forall
\quad i = 1,2, \dots, r,
\end{equation}
\begin{equation}\label{kll1}
\sum_{j<i}|k_j|^{\theta}\sim |k_{i-1}|^{\theta} \quad\text{and } \ \sum_{j<i}|k_j'|^{\theta}\sim
|k_{i-1}'|^{\theta},
\end{equation}
\begin{equation}\label{ksum2}
\sum_{i=1}^r |k_i|^{\gamma}e^{-|k_i|^{2\a}t} \lesssim t^{-\frac{\gamma}{2\a}}\quad
\text{and } \ \sum_{i=1}^r |k_i'|^{\gamma}e^{-|k_i'|^{2\a}t} \lesssim t^{-\frac{\gamma}{2\a}} .
\end{equation}
\end{Lemma}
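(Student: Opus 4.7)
The plan is to dispatch the three statements one at a time: (\ref{vk0}) is a direct computation, (\ref{kll1}) follows by summing a geometric series, and (\ref{ksum2}) requires splitting the sum at the maximizer of the continuous envelope of the terms. I expect (\ref{ksum2}) to be the main obstacle, since bounding every term crudely by the $L^\infty$ norm of $x\mapsto x^\gamma e^{-x^{2\a}t}$ would introduce a spurious factor of $r$, which is much larger than the target.

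For (\ref{vk0}) I would simply substitute definitions: since $k_i = 2^{i-1}K\zeta$ with $\zeta=(1,0,0)$ and $v'=(0,1,0)$, one has $k_i\cdot v' = 0$; and since $k'_i = k_i + \eta$ with $\eta=(0,0,1)$ and $v=(0,0,1)$, one obtains $k'_i\cdot v = k_i\cdot v + \eta\cdot v = 0 + 1 = 1$.

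For (\ref{kll1}), the geometric growth $|k_j|=2^{j-1}K$ yields
\[
\sum_{j<i}|k_j|^\theta = K^\theta\sum_{j=0}^{i-2} 2^{j\theta} = K^\theta\,\frac{2^{(i-1)\theta}-1}{2^\theta-1},
\]
which is sandwiched between $|k_{i-1}|^\theta = K^\theta\, 2^{(i-2)\theta}$ and a $\theta$-dependent constant multiple of it. Since $|k'_j|^2 = |k_j|^2+1$ and $K$ is chosen large, $|k'_j|\sim|k_j|$ uniformly in $j$, so the same estimate transfers to the primed sum.

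The main step is (\ref{ksum2}). The envelope $x\mapsto x^\gamma e^{-x^{2\a}t}$ attains its maximum, of order $t^{-\gamma/(2\a)}$, at $x\sim t^{-1/(2\a)}$, so I would choose the critical index $i_0$ with $|k_{i_0}|^{2\a}t \sim 1$ and split the sum. For $i\leq i_0$ the exponential factor is bounded by a universal constant, and the remaining $\sum_{i\leq i_0}|k_i|^\gamma$ is a geometric series controlled by its last term $|k_{i_0}|^\gamma \sim t^{-\gamma/(2\a)}$, exactly as in (\ref{kll1}). For $i\geq i_0$, writing $|k_i|^{2\a}t \sim 2^{2\a(i-i_0)}$, each term becomes
\[
|k_i|^\gamma e^{-|k_i|^{2\a}t}\sim t^{-\gamma/(2\a)}\, 2^{(i-i_0)\gamma}\, e^{-c\,2^{2\a(i-i_0)}},
\]
and the doubly-exponential decay makes this tail summable, again yielding $\lesssim t^{-\gamma/(2\a)}$. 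The edge cases $i_0<1$ or $i_0>r$ are handled by the same argument applied to one side of the split only, and the estimate for $|k'_i|$ follows from $|k'_i|\sim|k_i|$.
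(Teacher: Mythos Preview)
Your proposal is correct. The paper itself does not supply a proof of this lemma; it simply states ``We recall the following results whose proofs can be found in \cite{CD}.'' Your argument---direct substitution for (\ref{vk0}), geometric summation for (\ref{kll1}), and splitting the sum in (\ref{ksum2}) at the index $i_0$ where $|k_{i_0}|^{2\a}t\sim 1$ so that the low-frequency part is controlled by its top term and the high-frequency tail decays doubly-exponentially---is the standard route and is almost certainly what appears in \cite{CD}. One small remark: the implicit constants in (\ref{kll1}) and (\ref{ksum2}) depend on $\theta$, $\gamma$, and $\a$, which is harmless here since those parameters are fixed throughout; you may wish to say so explicitly, since the paper's convention for $\sim$ and $\lesssim$ nominally refers to ``absolute'' constants.
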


Using this lemma we can easily estimate the initial data in Besov spaces.
\begin{Lemma} \label{le:u0}
Let $(u_0, b_0)$ be as in (\ref{u0}) and $\theta_1, \theta_2>0$. Then
\begin{equation}\label{norm:u0-1}
\|u_0\|_{\dot{B}_{\infty, \infty}^{-{\theta_1}}}\lesssim r^{-{\beta_1}}, \qquad \|b_0\|_{\dot{B}_{\infty, \infty}^{-{\theta_2}}}\lesssim r^{-{\beta_2}}.
\end{equation}
\end{Lemma}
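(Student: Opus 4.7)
The plan is to use the heat-semigroup characterization of the Besov norm (which, as recalled in Section~\ref{sec:pre}, is equivalent for every exponent $\alpha>0$) and choose the exponent to match the wave-vector calculations in Lemma~\ref{k}. More precisely, to estimate $u_0$ I would use
\[
\|u_0\|_{\dot B^{-\theta_1}_{\infty,\infty}} = \sup_{t>0}\, t^{\theta_1/(2\alpha_1)}\,\|e^{-t(-\Delta)^{\alpha_1}}u_0\|_{L^\infty},
\]
and an analogous expression with $\alpha_2,\theta_2$ for $b_0$.

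Now I would apply the diffusion identity \eqref{difu} term-by-term to the explicit finite trigonometric sum defining $u_0$: since each mode $v\cos(k_i\cdot x)$ is an eigenfunction of $(-\Delta)^{\alpha_1}$ with eigenvalue $|k_i|^{2\alpha_1}$, one obtains
\[
e^{-t(-\Delta)^{\alpha_1}}u_0(x) = r^{-\beta_1}\sum_{i=1}^{r} |k_i|^{\theta_1} e^{-|k_i|^{2\alpha_1}t}\, v\cos(k_i\cdot x).
\]
Taking absolute values inside the sum and using $|v|=1$, $|\cos|\le1$ yields
\[
\|e^{-t(-\Delta)^{\alpha_1}}u_0\|_{L^\infty} \leq r^{-\beta_1}\sum_{i=1}^{r}|k_i|^{\theta_1} e^{-|k_i|^{2\alpha_1}t}.
\]

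The key step is then to invoke the dyadic sum bound \eqref{ksum2} from Lemma~\ref{k} with $\gamma=\theta_1$ and $\alpha=\alpha_1$, which gives $\sum_i |k_i|^{\theta_1} e^{-|k_i|^{2\alpha_1}t}\lesssim t^{-\theta_1/(2\alpha_1)}$ uniformly in $r$. Multiplying by $t^{\theta_1/(2\alpha_1)}$ the $t$-factors cancel, and taking the supremum over $t>0$ produces the desired bound $\|u_0\|_{\dot B^{-\theta_1}_{\infty,\infty}}\lesssim r^{-\beta_1}$. The same reasoning, now using the characterization of $\dot B^{-\theta_2}_{\infty,\infty}$ with exponent $\alpha_2$, the spectral identity for the modes $v'\cos(k_i'\cdot x)$, and the second half of \eqref{ksum2} for $k_i'$, yields $\|b_0\|_{\dot B^{-\theta_2}_{\infty,\infty}}\lesssim r^{-\beta_2}$.

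There is no real obstacle here: the argument is a direct computation that amounts to extracting the $r^{-\beta_j}$ prefactor and applying Lemma~\ref{k} to the remaining $r$-independent geometric-type sum. The one point that deserves care is matching exponents correctly, namely using $\alpha_1$ in the Besov characterization when estimating $u_0$ and $\alpha_2$ when estimating $b_0$, so that \eqref{ksum2} is applied with exactly the same $\alpha$ that appears in the heat kernel, producing the required cancellation of the $t$-powers.
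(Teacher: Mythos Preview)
Your proposal is correct and follows essentially the same approach as the paper: expand $e^{-t(-\Delta)^{\alpha}}u_0$ via \eqref{difu}, bound the $L^\infty$ norm by the sum of moduli, and apply \eqref{ksum2} so that the $t$-powers cancel. The only cosmetic difference is the choice of exponent in the heat-semigroup characterization: the paper takes $\alpha=\theta_1$ (so the prefactor becomes $t^{1/2}$), while you take $\alpha=\alpha_1$; since the Besov norm definition in Section~\ref{sec:pre} is valid for any $\alpha>0$ and \eqref{ksum2} likewise holds for arbitrary $\alpha$, both choices lead to the same conclusion.
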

\begin{proof}
Due to (\ref{difu}), we have  
\begin{equation} \label{h-u0}
e^{-t(-\Delta)^{\theta_1}}u_0=r^{-{\beta_1}}\sum_{i=1}^r|k_i|^{\theta_1}v
\cos(k_i\cdot x)e^{-|k_i|^{2{\theta_1}}t}.
\end{equation}
Thanks to Lemma \ref{k}, 
\bg \label{first-bound}
\|u_0\|_{\dot{B}_{\infty, \infty}^{-{\theta_1}}} \sim r^{-{\beta_1}}
\sup_{0<t<1} t^{\frac{1}{2}}\sum_{i=1}^r |k_i|^{\theta_1}e^{-|k_i|^{2{\theta_1}}t} \lesssim r^{-{\beta_1}}\notag.
\ed
The estimate of $b_0$ in $\dot{B}_{\infty, \infty}^{-{\theta_2}}$ can be obtained similarly.
\end{proof}
\begin{Lemma}\label{le:u0infty} Let $(u_0, b_0)$ be as in (\ref{u0}) and $\a>0$. Then
\begin{equation}\notag
\|e^{-t(-\Delta)^{\a}}u_0\|_{\infty} \lesssim r^{-{\beta_1}}t^{-\frac{\theta_1}{2\a}}, \ \ \|e^{-t(-\Delta)^{\a}}b_0\|_{\infty} \lesssim r^{-{\beta_2}}t^{-\frac{\theta_2}{2\a}}.
\end{equation}
\end{Lemma}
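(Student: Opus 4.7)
The plan is to reduce everything to a direct $L^\infty$ estimate of an explicit lacunary trigonometric sum, then invoke the geometric-sum bound already collected in Lemma \ref{k}. I do not anticipate any genuine obstacle here; the argument is essentially a three-line application of the preceding results, and the main point is simply to check that the bookkeeping of exponents lines up.

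First I would use the diffusion of a plane wave formula \eqref{difu}, applied to each summand of the initial data \eqref{u0}, to write
\[
e^{-t(-\Delta)^{\a}}u_0 \;=\; r^{-\beta_1}\sum_{i=1}^{r} |k_i|^{\theta_1}\, v \cos(k_i\cdot x)\, e^{-|k_i|^{2\a} t}.
\]
Since $|v|=1$ and $|\cos(k_i\cdot x)|\le 1$ pointwise, the triangle inequality gives
\[
\|e^{-t(-\Delta)^{\a}}u_0\|_{\infty} \;\le\; r^{-\beta_1}\sum_{i=1}^{r} |k_i|^{\theta_1}\, e^{-|k_i|^{2\a} t}.
\]

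Next I would invoke estimate \eqref{ksum2} of Lemma \ref{k} with $\gamma = \theta_1$ (the lemma is stated for any $\gamma,\theta>0$ and applies to the geometrically spaced wave vectors $|k_i|=2^{i-1}K$ defined in \eqref{kl}). This yields
\[
\sum_{i=1}^{r} |k_i|^{\theta_1}\, e^{-|k_i|^{2\a} t} \;\lesssim\; t^{-\frac{\theta_1}{2\a}},
\]
and combining with the previous display gives the desired bound
\[
\|e^{-t(-\Delta)^{\a}}u_0\|_{\infty} \;\lesssim\; r^{-\beta_1}\, t^{-\frac{\theta_1}{2\a}}.
\]

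The bound on $b_0$ is proved by the same argument: expand $e^{-t(-\Delta)^{\a}}b_0$ in its Fourier modes using \eqref{difu}, take $L^\infty$ norm modulus the bounded factors $v'$ and $\cos(k_i'\cdot x)$, and apply the second half of \eqref{ksum2} with $\gamma=\theta_2$. The only delicate point worth double-checking is that Lemma \ref{k} as stated allows the exponent $\a$ in the heat kernel to be arbitrary and independent of the exponents $\theta_1,\theta_2$ appearing in the amplitudes; since the geometric-sum argument underlying \eqref{ksum2} only uses the lacunary spacing of the $|k_i|$, this is indeed the case and the proof is complete.
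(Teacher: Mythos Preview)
Your proof is correct and follows essentially the same approach as the paper: expand $e^{-t(-\Delta)^{\a}}u_0$ via the plane-wave diffusion formula, bound the $L^\infty$ norm by the sum $r^{-\beta_1}\sum_i |k_i|^{\theta_1}e^{-|k_i|^{2\a}t}$, and apply \eqref{ksum2} with $\gamma=\theta_1$. Your version is in fact slightly more careful in citing \eqref{difu} directly rather than \eqref{h-u0} (which is written with $\theta_1$ in the semigroup exponent rather than a general $\a$), but the argument is identical.
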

\begin{proof}
Equality (\ref{h-u0}) and Lemma \ref{k} yield
\bg\notag
\|e^{-t(-\Delta)^{\a}}u_0\|_{\infty} \lesssim r^{-{\beta_1}}
 \sum_{i=1}^r |k_i|^{\theta_1}e^{-|k_i|^{2{\a}}t}
 \lesssim r^{-{\beta_1}}t^{-\frac{\theta_1}{2\a}}.
\ed
The estimate for $b_0$ can be obtained similarly.
\end{proof}

\bigskip

\subsection{Analysis of $u_1$ and $b_1$}
\label{sec:u1b1}
Recall that
\[
\aligned u  & = e^{-t(-\Delta)^{\a_1}}u_0 - u_1 + y,\\
b & = e^{-t(-\Delta)^{\a_2}}b_0 - b_1 + z.
\endaligned
\]
Due to (\ref{u1}) and (\ref{b1}) we have
\begin{align} \notag
u_1(x, t)= &\Bb_{\a_1}(e^{-t(-\Delta)^{\a_1}}u_0(x),e^{-t(-\Delta)^{\a_1}}u_0(x))\\
&-\Bb_{\a_1}(e^{-t(-\Delta)^{\a_1}}b_0(x),e^{-t(-\Delta)^{\a_1}}b_0(x)),\notag
\end{align}
\begin{align} \notag
b_1(x, t)= &\Bb_{\a_2}(e^{-t(-\Delta)^{\a_2}}u_0(x),e^{-t(-\Delta)^{\a_2}}b_0(x))\\
&-\Bb_{\a_2}(e^{-t(-\Delta)^{\a_2}}b_0(x),e^{-t(-\Delta)^{\a_2}}u_0(x)).\notag
\end{align}
By the fact that $k_i\cdot v=k'_i\cdot v'=0$ for all $i=1, 2, \ldots, r$,  it can be  immediately seen that
$$
\left(e^{-t(-\Delta)^{\a_1}}u_0\cdot\n\right)e^{-t(-\Delta)^{\a_1}}u_0 =\left(e^{-t(-\Delta)^{\a_1}}b_0\cdot\n\right)e^{-t(-\Delta)^{\a_1}}b_0 = 0, 
$$
hence $u_1\equiv 0$. Again since $k_i\cdot v'=0$ for all $i=1, 2, \ldots, r$ by (\ref{vk0}), it follows
$$
\left(e^{-t(-\Delta)^{\a_2}}b_0\cdot\n\right)e^{-t(-\Delta)^{\a_2}}u_0= 0, 
$$
hence 
\begin{equation} \label{new-b1}
b_1(x, t)= \Bb_{\a_2}(e^{-t(-\Delta)^{\a_2}}u_0(x),e^{-t(-\Delta)^{\a_2}}b_0(x)).
\end{equation}

Note that $\cos x\sin y=[\sin(x+y)-\sin(x-y)]/2$. Thanks to (\ref{eq:v}), (\ref{vk0}), (\ref{h-u0}), one can infer that 
\begin{equation}\label{u0interaction}
\begin{split}
&\left(e^{-t(-\Delta)^{\a_2}}u_0\cdot\n\right) e^{-t(-\Delta)^{\a_2}}b_0 \\
 =&-r^{-{\beta_1}-{\beta_2}}\sum_{i=1}^r\sum_{j=1}^r|k_i|^{\theta_1}|k'_j|^{\theta_2}e^{-(|k_i|^{2{\a_2}}+|k'_j|^{2{\a_2}})t}v'\cos(k_i\cdot x)\sin(k'_j\cdot x)\\ 
=&-\frac{r^{-{\beta_1}-{\beta_2}}}{2}\sum_{i=1}^r|k_i|^{\theta_1}|k'_i|^{\theta_2}e^{-(|k_i|^{2{\a_2}}+|k'_i|^{2{\a_2}})t}\sin(\eta\cdot x)v'\\ 
& -\frac{r^{-{\beta_1}-{\beta_2}}}{2}\sum_{i\neq j}^r|k_i|^{\theta_1}|k'_j|^{\theta_2}e^{-(|k_i|^{2{\a_2}}+|k'_j|^{2{\a_2}})t}\sin((k'_j-k_i)\cdot x)v'\\ 
& -\frac{r^{-{\beta_1}-{\beta_2}}}{2}\sum_{i=1}^r\sum_{j=1}^r|k_i|^{\theta_1}|k'_j|^{\theta_2}e^{-(|k_i|^{2{\a_2}}+|k'_j|^{2{\a_2}})t}\sin((k'_j+k_i)\cdot x)v'\\
:= &E_0+E_1+E_2.
\end{split}
\end{equation}

Since $\eta\cdot v'=0$, $(k'_j+k_i)\cdot v'=0$ and $(k'_j-k_i)\cdot v'=0$ for all $i,j$ (see \eqref{vk0}), it follows that  $E_{0}$, $E_{1}$ and $E_{2}$ are divergence free. 
Hence we can write 
\begin{equation}\label{dec:b1}
\begin{split}
b_1=&\int_0^te^{-(t-\tau)(-\Delta)^{\a_2}}E_{0}(\tau)d\tau+\int_0^te^{-(t-\tau)(-\Delta)^{\a_2}}E_{1}(\tau)d\tau\\
&+\int_0^te^{-(t-\tau)(-\Delta)^{\a_2}}E_{2}(\tau)d\tau := b_{10}+b_{11}+b_{12}.
\end{split}
\end{equation}

\begin{Lemma}\label{le:b10}
Let $b_{10}$ be as in (\ref{dec:b1}) and $s>0$. Assume $\theta_1+\theta_2=2\a_2$. Then
\begin{equation}\notag 
\begin{split}
&\|b_{10}(\cdot, t)\|_{\dot B_{\infty,\infty}^{-s}}\gtrsim r^{1-\beta_1-\beta_2},  \qquad |k_1|^{-2\a_2}\leq t\leq T,\\
&\|b_{10}(\cdot, t)\|_{\infty} \lesssim r^{1-\beta_1-\beta_2},  \qquad t>0.
\end{split}
\end{equation}
\end{Lemma}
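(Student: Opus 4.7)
The plan is to compute $b_{10}$ almost explicitly, exploiting the crucial fact that the ``diagonal'' piece $E_0$ of the high-high interaction in \eqref{u0interaction} collapses into a single plane wave with wave vector $\eta$ of unit length, independent of $i$. First I would substitute the formula for $E_0$ into \eqref{dec:b1} and, using $(-\Delta)^{\a_2}\sin(\eta\cdot x) = \sin(\eta\cdot x)$ (since $|\eta|=1$), pull the spatial factor out of the $\tau$-integral:
\[
b_{10}(x,t) = -\frac{r^{-\beta_1-\beta_2}}{2}\sin(\eta\cdot x)\,v'\cdot I(t),
\]
where
\[
I(t) = \sum_{i=1}^{r}|k_i|^{\theta_1}|k_i'|^{\theta_2}\int_0^t e^{-(|k_i|^{2\a_2}+|k_i'|^{2\a_2})\tau}e^{-(t-\tau)}\,d\tau.
\]

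Since $|k_i'|^2 = |k_i|^2 + 1 \sim |k_i|^2$ and by hypothesis $\theta_1+\theta_2 = 2\a_2$, we have $|k_i|^{\theta_1}|k_i'|^{\theta_2}\sim \lambda_i := |k_i|^{2\a_2}+|k_i'|^{2\a_2} \sim |k_i|^{2\a_2}$. The $\tau$-integral is $(\lambda_i-1)^{-1}(e^{-t}-e^{-\lambda_i t})$, so the $i$-th summand of $I(t)$ is comparable to $e^{-t}-e^{-\lambda_i t}$. For the lower bound, in the range $|k_1|^{-2\a_2}\leq t\leq T\leq\delta$ we have $e^{-t}\geq e^{-1}$, while $\lambda_i t \geq 2$ (using $\lambda_i \geq 2|k_1|^{2\a_2}$), so every summand is bounded below by an absolute positive constant and hence $I(t)\gtrsim r$. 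Combined with $\|\sin(\eta\cdot x)\,v'\|_{\dot B^{-s}_{\infty,\infty}}\sim |\eta|^{-s} = 1$, this yields the first inequality. For the $L^\infty$ estimate, the trivial bound $e^{-(t-\tau)}\leq 1$ gives $\int_0^t e^{-\lambda_i\tau}\,d\tau \leq \lambda_i^{-1}$, so each summand is $\lesssim 1$ and $I(t)\lesssim r$ for every $t>0$; together with $\|\sin(\eta\cdot x)\,v'\|_\infty \leq 1$ this gives $\|b_{10}(\cdot,t)\|_\infty \lesssim r^{1-\beta_1-\beta_2}$.

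The argument is essentially a direct computation, so I do not expect any serious obstacle. The only delicate point is the lower bound on the time integral: one must verify that the threshold $t\geq |k_1|^{-2\a_2}$ is exactly what forces \emph{every} summand in $I(t)$ (not just those with the largest $|k_i|$) to saturate at order~$1$, producing the crucial linear-in-$r$ gain that drives the norm inflation. The restriction $T\leq\delta$ merely ensures that the unit-frequency decay factor $e^{-(t-\tau)}$, which would otherwise introduce an extra small multiplier, stays of order~$1$ on the relevant interval.
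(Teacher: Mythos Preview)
Your proof is correct and follows essentially the same route as the paper: both compute $b_{10}$ explicitly as $-\tfrac{1}{2}r^{-\beta_1-\beta_2}\sin(\eta\cdot x)\,v'\,I(t)$, use $\theta_1+\theta_2=2\a_2$ to match $|k_i|^{\theta_1}|k_i'|^{\theta_2}$ with the denominator $\lambda_i-1$ coming from the time integral, and then observe that for $t\geq |k_1|^{-2\a_2}$ every one of the $r$ summands is of order~$1$. Your write-up is in fact slightly more careful than the paper's in spelling out why the threshold $t\geq |k_1|^{-2\a_2}$ saturates all summands and why $T$ small keeps $e^{-t}$ of order~$1$.
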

\begin{proof}
Using (\ref{u0interaction}) and (\ref{dec:b1}) on can easily obtain
\begin{equation}\notag
\begin{split}
b_{10}=&-\frac{r^{-\beta_1-\beta_2}}{2}\int_0^t\sum_{i=1}^r|k_i|^{\theta_1}|k'_i|^{\theta_2}e^{-(|k_i'|^{2\a_2}+|k_i|^{2\a_2})\tau}e^{-|\eta|^{2\a_2}(t-\tau)}\sin(\eta\cdot x)v'd\tau\\
=&-\frac{r^{-\beta_1-\beta_2}}{2}\sin (\eta\cdot x)v'\sum_{i=1}^r|k_i|^{\theta_1}|k'_i|^{\theta_2}e^{-t}\frac{1-e^{-(|k_i'|^{2\a_2}+|k_i|^{2\a_2}-1)t}}{|k'_i|^{2\a_2}+|k_i|^{2\a_2}-1}\\
\sim &-\frac{r^{-\beta_1-\beta_2}}{2}\sin (\eta\cdot x)v'\sum_{i=1}^re^{-t}(1-e^{-|k_i|^{2\a_2}t}),
\end{split}
\end{equation}
where we used 
$$|k_i|^{\theta_1}|k'_i|^{\theta_2}\sim |k'_i|^{2\a_2}+|k_i|^{2\a_2}-1,$$
which holds due to assumption $\theta_1+\theta_2=2\a_2$. Therefore,
\[
\|b_{10}(\cdot,t)\|_{\dot B_{\infty,\infty}^{-s}}\gtrsim r^{-\beta_1-\beta_2}\cdot r\sup_{0<t<1}t^{\frac{s}{2\a}}e^{-|\eta|^{2\a}t}
\gtrsim r^{1-\beta_1-\beta_2},
\]
for $|k_1|^{-2\a_2}\leq t\leq T$ and $s>0$.
On the other hand,
\[
\|b_{10}(\cdot,t)\|_{\infty}\lesssim \frac{r^{-\beta_1-\beta_2}}{2}\cdot r\lesssim r^{1-\beta_1-\beta_2}, \qquad t>0.
\]
\end{proof}

\begin{Lemma}\label{le:b11}
Let $b_{11}$ and $b_{12}$ be as in (\ref{dec:b1}). Then
\begin{equation}\notag
\|b_{11}(\cdot, t)\|_{\infty}+\|b_{12}(\cdot, t)\|_{\infty}\lesssim r^{-\beta_1-\beta_2}t^{1-(\theta_1+\theta_2)/{2\a_2}}, \qquad t>0. 
\end{equation}
\end{Lemma}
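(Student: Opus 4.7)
The plan is to expand $b_{11}$ and $b_{12}$ as explicit superpositions of plane waves with computable time amplitudes, exploit that the output frequencies are all \emph{high} in these two pieces, and then sum dyadically using Lemma \ref{k}.

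First I would write, for each mode, the effect of the semigroup in \eqref{dec:b1}:
\[
\int_0^t e^{-(t-\tau)(-\Delta)^{\a_2}} e^{-(|k_i|^{2\a_2}+|k'_j|^{2\a_2})\tau}\sin((k'_j\pm k_i)\cdot x)\dd\tau = \sin((k'_j\pm k_i)\cdot x)\int_0^t e^{-A_{ij}\tau}e^{-B^{\pm}_{ij}(t-\tau)}\dd\tau,
\]
where $A_{ij}=|k_i|^{2\a_2}+|k'_j|^{2\a_2}$ and $B^{\pm}_{ij}=|k'_j\pm k_i|^{2\a_2}$. The $-$ sign appears in $b_{11}$ with $i\neq j$, while the $+$ sign appears in $b_{12}$ for all $i,j$.

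The crucial geometric input is that, because the $k_i$'s are parallel to $\zeta$ with dyadically increasing magnitudes and $k'_j=k_j+\eta$ with $|\eta|=1$, one has
\[
|k'_j+k_i|\sim\max(|k_i|,|k'_j|), \qquad |k'_j-k_i|\sim\max(|k_i|,|k'_j|)\quad\text{for }i\neq j,
\]
so $A_{ij}\sim B^{\pm}_{ij}\sim \max(|k_i|,|k'_j|)^{2\a_2}$ in every term appearing in $b_{11}+b_{12}$. Since $\tau\mapsto e^{-A_{ij}\tau-B^{\pm}_{ij}(t-\tau)}$ is monotone on $[0,t]$, its maximum is attained at an endpoint and equals $e^{-\min(A_{ij},B^{\pm}_{ij})\,t}$, so
\[
\int_0^t e^{-A_{ij}\tau}e^{-B^{\pm}_{ij}(t-\tau)}\dd\tau\,\leq\, t\,e^{-c\,\max(|k_i|,|k'_j|)^{2\a_2}\,t}.
\]

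Putting this into the triangle inequality,
\[
\|b_{11}(\cdot,t)\|_\infty+\|b_{12}(\cdot,t)\|_\infty \lesssim r^{-\beta_1-\beta_2}\,t\sum_{i,j}|k_i|^{\theta_1}|k'_j|^{\theta_2}\,e^{-c\max(|k_i|,|k'_j|)^{2\a_2}\,t}.
\]
Splitting into the two regions $|k_i|\geq|k'_j|$ and $|k_i|<|k'_j|$, the inner sum in the first region is handled by \eqref{kll1}: for fixed $i$, $\sum_{j:\,|k'_j|\leq |k_i|}|k'_j|^{\theta_2}\lesssim|k_i|^{\theta_2}$. The outer sum is then $\sum_i|k_i|^{\theta_1+\theta_2}e^{-c|k_i|^{2\a_2}t}\lesssim t^{-(\theta_1+\theta_2)/{2\a_2}}$ by \eqref{ksum2}. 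The second region is symmetric. This yields the asserted bound $r^{-\beta_1-\beta_2}t^{1-(\theta_1+\theta_2)/{2\a_2}}$.

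The main obstacle is the second step: without the lower bound $|k'_j\pm k_i|\gtrsim\max(|k_i|,|k'_j|)$, the semigroup would give no effective decay on the high-frequency part, and one would be left with an integral $\int_0^t\tau^{-(\theta_1+\theta_2)/{2\a_2}}\dd\tau$ that diverges at $\tau=0$ precisely in the critical regime $\theta_1+\theta_2=2\a_2$. The dyadic-along-$\zeta$ plus perpendicular-shift-by-$\eta$ structure of the wave vectors is exactly what rules out any near-cancellation $k'_j\pm k_i\approx 0$ for $i\neq j$ (for the $-$ case) or for all $i,j$ (for the $+$ case, since the $k_i$'s all point in the same direction), and is what isolates all the resonant, low-frequency output into $b_{10}$.
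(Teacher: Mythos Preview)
Your proposal is correct and follows essentially the same approach as the paper: expand $b_{11}$ and $b_{12}$ as explicit sums of plane waves, bound each time integral by $t\,e^{-c\max(|k_i|,|k'_j|)^{2\a_2}t}$ using the fact that the output frequencies $k'_j\pm k_i$ are comparable to the larger input frequency, and then sum via \eqref{kll1} and \eqref{ksum2}. The only cosmetic difference is that the paper evaluates the time integral explicitly and invokes the boundedness of $(1-e^{-x})/x$, whereas you use the monotonicity of the integrand to reach the same bound $t\,e^{-\min(A_{ij},B^{\pm}_{ij})t}$; the subsequent dyadic summation is identical.
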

\begin{proof}
Using (\ref{u0interaction}), (\ref{dec:b1}), and the fact that the fact that $\frac{1-e^{-x}}{x}$ is bounded for $x>0$, we can estimate
\begin{align}\notag
b_{11}=&\frac{r^{-\beta_1-\beta_2}}{2}\int_0^t\sum_{i\neq j}^r|k_i|^{\theta_1}|k'_j|^{\theta_2}e^{-(|k_i|^{2\a_2}+|k'_j|^{2\a_2})\tau}e^{-|k'_j-k_i|^{2\a_2}(t-\tau)}\\
&\cdot \sin((k'_j-k_i)\cdot x)v'd\tau\notag\\
\sim& \frac{r^{-\beta_1-\beta_2}}{2}\sum_{i=1}^r\sum_{j<i}|k_i|^{\theta_1}|k'_j|^{\theta_2}e^{-|k_i-k'_j|^{2\a_2}t}\frac{1-e^{-(|k_i|^{2\a_2}+|k'_j|^{2\a_2}-|k_i-k'_j|^{2\a_2})t}}{|k_i|^{2\a_2}+|k'_j|^{2\a_2}-|k_i-k'_j|^{2\a_2}}\notag\\
&\cdot\sin ((k'_j-k_i)\cdot x)v'\notag\\
\sim& \frac{r^{-\beta_1-\beta_2}}{2}\sum_{i=1}^r\sum_{j<i}|k_i|^{\theta_1}|k'_j|^{\theta_2}te^{-|k_i|^{2\a_2}t}
\sin ((k'_j-k_i)\cdot x)v'\notag.
\end{align}
Now thanks to  (\ref{kll1}) and (\ref{ksum2}) we have
\begin{align}\notag
\|b_{11}(\cdot, t)\|_{\infty}&\lesssim r^{-\beta_1-\beta_2}\sum_{i=1}^r\sum_{j<i}|k_i|^{\theta_1}|k'_j|^{\theta_2}te^{-|k_i|^{2\a_2}t}\\
&\lesssim r^{-\beta_1-\beta_2}\sum_{i=1}^r|k_i|^{\theta_1}|k'_i|^{\theta_2}te^{-|k_i|^{2\a_2}t}\notag\\
&\lesssim r^{-\beta_1-\beta_2}\sum_{i=1}^r|k_i|^{\theta_1+\theta_2}te^{-|k_i|^{2\a_2}t}\notag\\
&\lesssim r^{-\beta_1-\beta_2}t^{1-(\theta_1+\theta_2)/{2\a_2}}.\notag
\end{align}
Similarly, we have
\begin{align}\notag
b_{12}=&\frac{r^{-\beta_1-\beta_2}}{2}\int_0^t\sum_{i=1}^r\sum_{j=1}^r|k_i|^{\theta_1}|k'_j|^{\theta_2}e^{-(|k_i|^{2\a_2}+|k'_j|^{2\a_2})\tau}e^{-|k_i+k'_j|^{2\a_2}(t-\tau)}\\
&\cdot\sin((k_i+k'_j)\cdot x)v'd\tau\notag\\
=&\frac{r^{-\beta_1-\beta_2}}{2}
\sum_{i=1}^r\sum_{j=1}^r|k_i|^{\theta_1}|k'_j|^{\theta_2}e^{-(|k_i|^{2\a_2}+|k'_j|^{2\a_2})t}\frac{1-e^{-(|k_i+k'_j|^{2\a_2}-|k_i|^{2\a_2}-|k'_j|^{2\a_2})t}}{|k_i+k'_j|^{2\a_2}-|k_i|^{2\a_2}-|k'_j|^{2\a_2}}\notag\\
&\cdot\sin ((k_i+k'_j)\cdot x)v'\notag\\
\sim& r^{-\beta_1-\beta_2}\sum_{i=1}^r\sum_{j\leq i}|k_i|^{\theta_1}|k'_j|^{\theta_2}te^{-(|k_i|^{2\a_2}+|k'_j|^{2\a_2})t}\sin ((k_i+k'_j)\cdot x)v'\notag.
\end{align}
Thus,
\begin{align}\notag
\|b_{12}(\cdot, t)\|_{\infty}&\lesssim r^{-\beta_1-\beta_2}\sum_{i=1}^r\sum_{j\leq i}|k_i|^{\theta_1}|k'_j|^{\theta_2}te^{-|k_i|^{2\a}t}\\
&\lesssim r^{-\beta_1-\beta_2}\sum_{i=1}^r|k_i|^{\theta_1+\theta_2}te^{-|k_i|^{2\a}t}\notag\\
&\lesssim r^{-\beta_1-\beta_2}t^{1-(\theta_1+\theta_2)/{2\a_2}}.\notag
\end{align}
\end{proof}

\bigskip

\subsection{Analysis of $y$ and $z$}
\label{sec:yz}

In order to bound $y$ and $z$, we will utilize estimate (\ref{Bac}).

Recall that we have (see  Subsection \ref{sec:rewrite})
\bg\label{y-t}
y(t) = -\int\limits_{0}^{t}
e^{-(t-\tau) (-\Delta)^{\a_1}} [G_0 (\tau)+G_1 (\tau)+G_2 (\tau)]d\tau, \qquad t\in [0, T].
\ed 
\bg\label{z-t}
z(t) = -\int\limits_{0}^{t}
e^{-(t-\tau) (-\Delta)^{\a_2}} [K_0 (\tau)+K_1 (\tau)+K_2 (\tau)]d\tau, \qquad t\in [0, T].
\ed 


\begin{Lemma}\label{le:yinfty}
Let $\a_1, \a_2\in[1,\infty)$ and $\beta_1, \beta_2 \in (\frac{1}{3},\frac{1}{2})$. Assume additionally that 
\begin{equation}\label{para-theta}
\begin{cases}
\theta_1+\theta_2=2\a_2, \\
1\leq\theta_1\leq 4\a_2-\frac{\a_2}{\a_1}-1, \\
1\leq\theta_2\leq 4\a_1-\frac{\a_1}{\a_2}-1.
\end{cases}
\end{equation}
Then
\begin{equation}\label{yzinfty}
\begin{split}
\|y(t)\|_{\infty}+\|z(t)\|_{\infty}\lesssim &r^{1-\beta_1-2\beta_2}t^{1-\frac{1}{2\a_1}-\frac{\theta_2}{2\a_1}}+r^{2(1-\beta_1-\beta_2)}t^{1-\frac{1}{2\a_1}}\\
&+r^{1-2\beta_1-\beta_2}t^{1-\frac{1}{2\a_2}-\frac{\theta_1}{2\a_2}},
\end{split}
\end{equation}
for all $0\leq t\leq T$, provided  $T$ is small and $r$ is large enough.
\end{Lemma}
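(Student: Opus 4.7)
The plan is a Picard fixed-point argument for $(y,z)$ in $C([0,T];L^\infty\times L^\infty)$. Let
\[
M(t)=r^{1-\beta_1-2\beta_2}t^{1-\frac{1}{2\a_1}-\frac{\theta_2}{2\a_1}}+r^{2(1-\beta_1-\beta_2)}t^{1-\frac{1}{2\a_1}}+r^{1-2\beta_1-\beta_2}t^{1-\frac{1}{2\a_2}-\frac{\theta_1}{2\a_2}},
\]
and set $\Xx_T=\{(y,z)\in C([0,T];L^\infty\times L^\infty):\|y(t)\|_\infty+\|z(t)\|_\infty\leq 2M(t)\}$. Define $\Phi(y,z)=(\tilde y,\tilde z)$ by the right-hand sides of \eqref{y-t}, \eqref{z-t}, viewing $G_i,K_i$ as functionals of $(y,z)$. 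The goal is to show that $\Phi$ contracts on $\Xx_T$ for $T$ small and $r$ large; the bound \eqref{yzinfty} will then be exactly the bootstrap that defines $\Xx_T$.

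The principal input is the estimate of the seed terms $G_0, K_0$, which do not depend on $(y,z)$. Since $u_1\equiv 0$ by Subsection \ref{sec:u1b1}, $G_0$ retains only the $b_0$--$b_1$ interactions, while $K_0$ reduces to $\mathbb{P}[(e^{-t(-\Delta)^{\a_2}}u_0\cdot\n)b_1]-\mathbb{P}[(b_1\cdot\n)e^{-t(-\Delta)^{\a_2}}u_0]$. Applying Lemma \ref{le:bil} to the resulting bilinear convolutions, combined with Lemma \ref{le:u0infty} and the uniform bound $\|b_1(\tau)\|_\infty\lesssim r^{1-\beta_1-\beta_2}$ (which follows from Lemmas \ref{le:b10}--\ref{le:b11} since $\theta_1+\theta_2=2\a_2$), the integrals $\int_0^te^{-(t-\tau)(-\Delta)^{\a_1}}G_0\,d\tau$ and $\int_0^te^{-(t-\tau)(-\Delta)^{\a_2}}K_0\,d\tau$ are bounded respectively by the first two and by the third summand of $M(t)$. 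The beta-type integrals $\int_0^t(t-\tau)^{-1/(2\a_i)}\tau^{-\theta_j/(2\a_i)}\,d\tau$ that appear converge because $\a_i\geq 1$ and because the upper bounds in \eqref{para-theta} force $\theta_j/(2\a_i)<1$, producing the exact powers of $t$ stated in \eqref{yzinfty}.

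The remaining terms $G_1, K_1$ (linear in $y,z$) and $G_2, K_2$ (quadratic) are handled via the bootstrap: using $\|y(\tau)\|_\infty+\|z(\tau)\|_\infty\leq 2M(\tau)$ together with Lemma \ref{le:bil}, each such contribution can be written as $C M(t)\cdot r^\sigma t^\gamma$, with $\gamma>0$ secured by the upper bounds $\theta_1\leq 4\a_2-\a_2/\a_1-1$ and $\theta_2\leq 4\a_1-\a_1/\a_2-1$, while $\sigma$ is controlled by the range $\beta_j\in(\tfrac{1}{3},\tfrac{1}{2})$; in particular $\beta_j>\tfrac{1}{3}$ prevents the quadratic self-interaction produced by $G_2,K_2$ from outgrowing the $r^{2(1-\beta_1-\beta_2)}$ summand already present in $M$. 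Choosing $T$ so small that $r^\sigma T^\gamma\leq \tfrac{1}{4}$ in each of the finitely many resulting estimates gives $\Phi(\Xx_T)\subset\Xx_T$, and the identical computation applied to $\Phi(y,z)-\Phi(y',z')$ delivers contraction; the fixed point satisfies \eqref{yzinfty}. The main obstacle is essentially bookkeeping: one has to verify term by term that the precise exponents in \eqref{para-theta} are exactly what is needed to keep every resulting time power nonnegative and every $r$-factor no larger than those already appearing in $M(t)$, so that the three seed summands remain dominant and the bootstrap closes.
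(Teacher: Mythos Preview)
Your approach is correct and rests on the same core estimates as the paper: $u_1\equiv 0$, the bilinear bound of Lemma~\ref{le:bil}, the $L^\infty$ control of $e^{-t(-\Delta)^\a}u_0$, $e^{-t(-\Delta)^\a}b_0$, and $b_1$ from Lemmas~\ref{le:u0infty}--\ref{le:b11}, and Beta-function integrals producing the time exponents in \eqref{yzinfty}. The paper, however, does \emph{not} run a contraction mapping. Since $(u,b)$ is already a smooth periodic solution, $(y,z)$ is given a~priori, and the paper simply derives the inequality
\[
w(t)\lesssim C_1+(C_2+C_3)w+C_4w^2,\qquad w=\sup_{0<\tau<t}\big(\|y(\tau)\|_\infty+\|z(\tau)\|_\infty\big),
\]
and closes it by a continuity/barrier argument from $w(0)=0$ after arranging $C_2+C_3+C_1C_4\ll 1$ via the explicit coupling $T=r^{-\gamma}$ with $\gamma$ satisfying \eqref{para-gamma}. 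Your Picard scheme reaches the same destination but carries the extra overhead of verifying contraction and identifying the fixed point with the given $(y,z)$; the paper's direct a~priori bound is shorter precisely because existence is already in hand. One small correction: the lower bound $\beta_j>\tfrac13$ plays no role in controlling the quadratic terms here---the paper absorbs $C_4w^2$ using only \eqref{para-gamma}, which needs merely $\beta_1+\beta_2<1$. Finally, note that ``$T$ small'' must mean ``small depending on $r$'' (the paper takes $T=r^{-\gamma}$), since several linear-in-$w$ coefficients such as $r^{1-\beta_1-\beta_2}t^{1-1/(2\a_1)}$ carry a positive power of $r$ and cannot be made small by shrinking $T$ alone.
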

\begin{proof} 
Recall that $u_1=0$.
Using (\ref{eq:g}) and (\ref{y-t}) we obtain
\begin{equation}\notag
\begin{split}
\|y(t)\|_\infty\lesssim &
\|\Bb_{\a_1}(e^{-t(-\Delta)^{\a_1}}b_0, b_1(t))\|_\infty+\|\Bb_{\a_1}(b_1(t), b_1(t))\|_\infty\\
&+\|\Bb_{\a_1}(e^{-t(-\Delta)^{\a_1}}u_0, y(t))\|_\infty
+\|\Bb_{\a_1}(e^{-t(-\Delta)^{\a_1}}b_0, z(t))\|_\infty\\
&+\|\Bb_{\a_1}(b_1(t), z(t))\|_\infty
+\|\Bb_{\a_1}(y(t), y(t))\|_\infty+\|\Bb_{\a_1}(z(t), z(t))\|_\infty.
\end{split}
\end{equation}
Recall the following estimate for the Beta function:
\begin{align}\notag
\int_0^t(t-\tau)^{-\frac{1}{2{\a_1}}}\tau^{-\frac{\theta_2}{2\a_1}}d\tau=&t^{1-\frac{1}{2\a_1}-\frac{\theta_2}{2\a_1}}B(1-\frac{\theta_2}{2\a_1}, 1-\frac{1}{2\a_1})\\
\leq &Ct^{1-\frac{1}{2\a_1}-\frac{\theta_2}{2\a_1}},\notag
\end{align}
where $\a>1/2$ and $0<\frac{\theta_2}{2\a_1}<1$.
Now thanks to the bilinear estimate (\ref{Bac}), Lemmas \ref{le:u0infty}, \ref{le:b10}, and \ref{le:b11}, it follows that
\begin{align}\notag
\|\Bb_{\a_1}(e^{-t(-\Delta)^{\a_1}}b_0, b_1(t))\|_\infty
&\lesssim\int_0^t\frac{1}{(t-\tau)^{1/{2\a_1}}}\|e^{-\tau(-\Delta)^{\a_1}}b_0\|_{\infty}\|b_1(\tau)\|_{\infty}d\tau\notag\\
&\lesssim r^{1-\beta_1-2\beta_2}\int_0^t(t-\tau)^{-\frac{1}{2{\a_1}}}\tau^{-\frac{\theta_2}{2\a_1}}d\tau\notag\\
&\lesssim r^{1-\beta_1-2\beta_2}t^{1-\frac{1}{2\a_1}-\frac{\theta_2}{2\a_1}}\notag.
\end{align}
Due to the estimates obtained in previous two subsections, we also have the following:
\begin{equation}\notag
\|\Bb_{\a_1}(b_1(t), b_1(t))\|_\infty
\lesssim\int_0^t\frac{1}{(t-\tau)^{1/{2{\a_1}}}}\|b_1(\tau)\|_{\infty}^2d\tau\lesssim r^{2(1-\beta_1-\beta_2)}
t^{1-\frac{1}{2\a_1}},
\end{equation}
\begin{align}\notag
\|\Bb_{\a_1}(e^{-t(-\Delta)^{\a_1}}u_0, y(t))\|_\infty
&\lesssim\int_0^t\frac{1}{(t-\tau)^{1/{2\a_1}}}\|e^{-\tau(-\Delta)^{\a_1}}u_0\|_{\infty}\|y(\tau)\|_{\infty}d\tau\notag\\
&\lesssim r^{-\beta_1}\int_0^t(t-\tau)^{-\frac{1}{2\a_1}}\tau^{-\frac{\theta_1}{2\a_1}}d\tau\sup_{0<\tau<t}\|y(\tau)\|_{\infty}  \notag\\
&\lesssim r^{-\beta_1}t^{1-\frac{1}{2\a_1}-\frac{\theta_1}{2\a_1}}\sup_{0<\tau<t}\|y(\tau)\|_{\infty}\notag,
\end{align}
\begin{align}\notag
\|\Bb_{\a_1}(e^{-t(-\Delta)^{\a_1}}b_0, z(t))\|_\infty
&\lesssim\int_0^t\frac{1}{(t-\tau)^{1/{2\a_1}}}\|e^{-\tau(-\Delta)^{\a_1}}b_0\|_{\infty}\|z(\tau)\|_{\infty}d\tau\notag\\
&\lesssim r^{-\beta_2}\int_0^t(t-\tau)^{-\frac{1}{2\a_1}}\tau^{-\frac{\theta_2}{2\a_1}}d\tau\sup_{0<\tau<t}\|z(\tau)\|_{\infty}  \notag\\
&\lesssim r^{-\beta_2}t^{1-\frac{1}{2\a_1}-\frac{\theta_2}{2\a_1}}\sup_{0<\tau<t}\|z(\tau)\|_{\infty}\notag,
\end{align}
\begin{align}\notag
\|\Bb_{\a_1}(b_1(t), z(t))\|_\infty
&\lesssim\int_0^t\frac{1}{(t-\tau)^{1/{2\a_1}}}\|b_1(\tau)\|_{\infty}\|z(\tau)\|_{\infty}d\tau\notag\\
&\lesssim r^{1-\beta_1-\beta_2}\int_0^t(t-\tau)^{-\frac{1}{2\a_1}}d\tau\sup_{0<\tau<t}\|z(\tau)\|_{\infty}\notag\\
&\lesssim r^{1-\beta_1-\beta_2}t^{1-\frac{1}{2\a_1}}\sup_{0<\tau<t}\|z(\tau)\|_{\infty}\notag,
\end{align}
\begin{align}\notag
&\|\Bb_{\a_1}(y(t), y(t))\|_\infty+\|\Bb_{\a_1}(z(t), z(t))\|_\infty\\
\lesssim&\int_0^t\frac{1}{(t-\tau)^{1/{2\a_1}}}\left(\|y(\tau)\|_{\infty}^2+\|z(\tau)\|_{\infty}^2\right)d\tau\notag\\
\lesssim &t^{1-\frac{1}{2\a_1}}\left(\sup_{0<\tau<t}\|y(\tau)\|_{\infty}^2+\sup_{0<\tau<t}\|z(\tau)\|_{\infty}^2\right)\notag.
\end{align}
Therefore,
\begin{equation}\label{yinfty}
\begin{split}
\|y(t)\|_{\infty}
\lesssim & r^{1-\beta_1-2\beta_2}t^{1-\frac{1}{2\a_1}-\frac{\theta_2}{2\a_1}}+r^{2(1-\beta_1-\beta_2)}t^{1-\frac{1}{2\a_1}}\\
&+r^{-\beta_1}t^{1-\frac{1}{2\a_1}-\frac{\theta_1}{2\a_1}}\sup_{0<\tau<t}\|y(\tau)\|_{\infty}
+r^{-\beta_2}t^{1-\frac{1}{2\a_1}-\frac{\theta_2}{2\a_1}}\sup_{0<\tau<t}\|z(\tau)\|_{\infty}\\
&+r^{1-\beta_1-\beta_2}t^{1-\frac{1}{2\a_1}}\sup_{0<\tau<t}\|z(\tau)\|_{\infty}\\
&+t^{1-\frac{1}{2\a_1}}\left(\sup_{0<\tau<t}\|y(\tau)\|_{\infty}^2+\sup_{0<\tau<t}\|z(\tau)\|_{\infty}^2\right).
\end{split}
\end{equation}
By (\ref{eq:k}), (\ref{z-t}) and the fact $u_1=0$, we have
\begin{equation}\notag
\begin{split}
\|z(t)\|_\infty\lesssim &\|\Bb_{\a_1}(e^{-t(-\Delta)^{\a_2}}u_0, b_1(t))\|_\infty
+\|\Bb_{\a_2}(e^{-t(-\Delta)^{\a_2}}u_0, z(t))\|_\infty\\
&+\|\Bb_{\a_2}(e^{-t(-\Delta)^{\a_2}}b_0, y(t))\|_\infty
+\|\Bb_{\a_2}(b_1(t), y(t))\|_\infty
+\|\Bb_{\a_2}(y(t), z(t))\|_\infty.
\end{split}
\end{equation}
For $\a_2>1/2$ and $0<\frac{\theta_1}{2\a_2}, \frac{\theta_2}{2\a_2}<1$, a similar calculation shows that 
\begin{equation}\label{zinfty}
\begin{split}
\|z(t)\|_\infty\lesssim & r^{1-2\beta_1-\beta_2}t^{1-\frac{1}{2\a_2}-\frac{\theta_1}{2\a_2}}
+r^{-\beta_1}t^{1-\frac{1}{2\a_2}-\frac{\theta_1}{2\a_2}}\sup_{0<\tau<t}\|z(\tau)\|_\infty\\
&+r^{-\beta_2}t^{1-\frac{1}{2\a_2}-\frac{\theta_2}{2\a_2}}\sup_{0<\tau<t}\|y(\tau)\|_\infty
+r^{1-\beta_1-\beta_2}t^{1-\frac{1}{2\a_2}}\sup_{0<\tau<t}\|y(\tau)\|_\infty\\
&+t^{1-\frac{1}{2\a_2}}\sup_{0<\tau<t}\|y(\tau)\|_\infty\sup_{0<\tau<t}\|z(\tau)\|_\infty.
\end{split}
\end{equation}
Let $w(t)=\|y(t)\|_\infty+\|z(t)\|_\infty$ and $w=\sup_{0<\tau<t}\|y(\tau)\|_\infty+\sup_{0<\tau<t}\|z(\tau)\|_\infty$. Adding (\ref{yinfty}) and (\ref{zinfty}) yields
\begin{equation}\notag
\begin{split}
w(t)\lesssim &r^{1-\beta_1-2\beta_2}t^{1-\frac{1}{2\a_1}-\frac{\theta_2}{2\a_1}}+r^{2(1-\beta_1-\beta_2)}t^{1-\frac{1}{2\a_1}}+r^{1-2\beta_1-\beta_2}t^{1-\frac{1}{2\a_2}-\frac{\theta_1}{2\a_2}}\\
&+\left(r^{-\beta_1}t^{1-\frac{1}{2\a_1}-\frac{\theta_1}{2\a_1}}+
r^{-\beta_2}t^{1-\frac{1}{2\a_2}-\frac{\theta_2}{2\a_2}}+
r^{1-\beta_1-\beta_2}t^{1-\frac{1}{2\a_2}}\right)\sup_{0<\tau<t}\|y(\tau)\|_{\infty}\notag\\
&+\left(r^{-\beta_2}t^{1-\frac{1}{2\a_1}-\frac{\theta_2}{2\a_1}}+
r^{1-\beta_1-\beta_2}t^{1-\frac{1}{2\a_1}}+
r^{-\beta_1}t^{1-\frac{1}{2\a_2}-\frac{\theta_1}{2\a_2}}\right)\sup_{0<\tau<t}\|z(\tau)\|_\infty\notag\\
&+\left(t^{1-\frac{1}{2\a_1}}+t^{1-\frac{1}{2\a_2}}\right)w^2\notag\\
:=& C_1+C_2\sup_{0<\tau<t}\|y(\tau)\|_{\infty}+C_3\sup_{0<\tau<t}\|z(\tau)\|_{\infty}+C_4w^2\notag\\
\lesssim & C_1+(C_2+C_3) w+ C_4w^2\notag\\
= & C_1+\left (C_2+C_3+C_4w\right)w.
\end{split}
\end{equation}

We shall choose large enough $r$ and  small enough $T>0$,  so that 
\begin{equation}\label{para1}
C_2+C_3+C_1C_4\ll 1
\end{equation}
for $0\leq t\leq T$. Since $w(0)=0$, we have the following bound:
\bg\notag
\|w(t)\|_\infty\lesssim C_1\lesssim r^{1-\beta_1-2\beta_2}t^{1-\frac{1}{2\a_1}-\frac{\theta_2}{2\a_1}}+r^{2(1-\beta_1-\beta_2)}t^{1-\frac{1}{2\a_1}}+r^{1-2\beta_1-\beta_2}t^{1-\frac{1}{2\a_2}-\frac{\theta_1}{2\a_2}},
\ed
for all $0<t\leq T$. 

Indeed, since the powers of $t$ in $A$ are all nonnegative by the hypothesis of the lemma, it follows that
\begin{align}\notag
C_2+C_3+C_1C_4\lesssim & r^{1-\beta_1-\beta_2}T^{1-\frac{1}{2\a_2}}+r^{1-\beta_1-\beta_2}T^{1-\frac{1}{2\a_1}}\\
&+r^{2(1-\beta_1-\beta_2)}T^{1-\frac{1}{2\a_1}}\left(T^{1-\frac{1}{2\a_1}}+T^{1-\frac{1}{2\a_2}}\right)\notag.
\end{align}
 Let $T=r^{-\gamma}$. Then
\begin{equation}\label{para-C}
\begin{split}
C_2+C_3+C_1C_4\lesssim & r^{1-\beta_1-\beta_2-\gamma(1-\frac{1}{2\a_2})}+r^{1-\beta_1-\beta_2-\gamma(1-\frac{1}{2\a_1})}\\
&+r^{2(1-\beta_1-\beta_2)-\gamma(2-\frac{1}{\a_1})}+r^{2(1-\beta_1-\beta_2)-\gamma(2-\frac{1}{2\a_1}-\frac{1}{2\a_2})}.
\end{split}
\end{equation}
Now we choose $\gamma$ satisfying 
\bg\label{para-gamma}
\gamma>\frac{1-\beta_1-\beta_2}{1-1/(2\a_1)}, \qquad \gamma>\frac{1-\beta_1-\beta_2}{1-1/(2\a_2)}.
\ed
Then all the powers of $r$ are negative in (\ref{para-C}). Thus (\ref{para1}) is satisfied for $r$ large enough,
which proves the conclusion of the lemma.
\end{proof}

\subsection{Finishing the proof}
\label{subsec:end}

Here we  complete the proof of Theorem \ref{Mthm}.
Recall that  $u_0$ and $b_0$ are space-periodic and smooth. Then there exists $T^*>0$ and
a smooth space-periodic solution $(u(t), b(t))$ to \eqref{FMHD} on $[0,T^*)$
with $u(0)=u_0, b(0)=b_0$, so that either $T^*= +\infty$ or
\[
\limsup_{t\to T^*-} \left(\|u(t)\|_{\infty}+\|b(t)\|_{\infty}\right) = +\infty.
\]
Lemmas~\ref{le:b10},
\ref{le:b11}, and \ref{le:yinfty} imply that $T^* >T$. Thanks to
(\ref{b}), (\ref{ineq:imbed}), Lemmas \ref{le:u0infty}, \ref{le:b10}, \ref{le:b11}, and \ref{le:yinfty}, it follows that
\begin{equation}\label{ineq:blow-up}
\begin{split}
&\|b(\cdot, t)\|_{\dot{B}_{\infty,\infty}^{-s}}\\
\geq&\|b_{10}(\cdot, t)\|_{\dot{B}_{\infty,\infty}^{-s}} - \|b_{11}(\cdot, t)\|_{\infty}-\|b_{12}(\cdot, t)\|_{\infty}-\|e^{-t(-\Delta)^{\a_2}}b_0\|_{\infty}
- \|z(\cdot, t)\|_{\infty}  \\
 \gtrsim &r^{1-\beta_1-\beta_2}
 \left(1-r^{\beta_1-1}t^{-\frac{\theta_2}{2\a_2}}-r^{-\beta_2}t^{1-\frac{1}{2\a_1}-\frac{\theta_2}{2\a_1}} 
 -r^{1-\beta_1-\beta_2}t^{1-\frac{1}{2\a_1}}-r^{-\beta_1}t^{1-\frac{1}{2\a_2}-\frac{\theta_1}{2\a_2}}\right)\\
 \gtrsim &r^{1-\beta_1-\beta_2}\left(1-r^{\beta_1-1}|k_1|^{\theta_2}-r^{-\beta_2}T^{1-\frac{1}{2\a_1}-\frac{\theta_2}{2\a_1}}-r^{1-\beta_1-\beta_2}T^{1-\frac{1}{2\a_1}}\right),
\end{split}
\end{equation}
for $|k_1|^{-2\a_2}\leq t\leq T$. We will choose parameters so that 
\begin{equation}\label{small-p}
A:=r^{\beta_1-1}|k_1|^{\theta_2}+r^{-\beta_2}T^{1-\frac{1}{2\a_1}-\frac{\theta_2}{2\a_1}}+r^{1-\beta_1-\beta_2}T^{1-\frac{1}{2\a_1}}\leq1/2.
\end{equation}
Let $|k_1|=r^{\zeta}$, where $\zeta>0$, and $T=r^{-\gamma}$ as in Lemma \ref{le:yinfty}. Then
\[
A=r^{\beta_1-1+\zeta\theta_2}+r^{-\beta_2-\gamma(1-\frac{1}{2\a_1}-\frac{\theta_2}{2\a_1})}+r^{1-\beta_1-\beta_2-\gamma(1-\frac{1}{2\a_1})}.
\]
To make (\ref{small-p}) hold for large enough $r$, it is sufficient to choose $\zeta,\gamma$ such that 
\begin{equation}\label{para-gamma-theta}
\begin{cases}
& 0<\zeta<\frac{1-\beta_1}{\theta_2}, \\
& \frac{1-\beta_1-\beta_2}{1-1/(2\a_1)} < \gamma <2\a_2\zeta,\\
& 1-\frac{1}{2\a_1}-\frac{\theta_2}{2\a_1}\geq 0.
\end{cases}
\end{equation}
Moreover, due to $\gamma<2\a_2\zeta$ in (\ref{para-gamma-theta}), we have that that  $|k_1|^{-2\a_2}< T$, which is required in Lemma \ref{le:b10}.
We verify that there exist $\beta_1, \beta_2, \theta_1$ and $\theta_2$ such that the assumption 
(\ref{para-theta}) in Lemma \ref{le:yinfty} and conditions (\ref{para-gamma}) (\ref{para-gamma-theta}) are compatible. Indeed, one can take $\beta_1=\beta_2=\frac{1}{2}-\epsilon$ for $0<\epsilon<\frac{1}{6}$. Since $\a_1, \a_2\geq 1$, it follows
$$
\frac{1-\beta_1-\beta_2}{1-1/(2\a_1)}\leq 4\epsilon, \qquad 
\frac{1-\beta_1-\beta_2}{1-1/(2\a_2)}\leq 4\epsilon.
$$
On the other hand,  due to the relation $\theta_1+\theta_2=2\a_2$ in (\ref{para-theta}), we have
\begin{equation}\notag
2\a_2\frac{1-\beta_1}{\theta_2}> \frac{1}{2}+\epsilon.
\end{equation}
Thus, the first two conditions in (\ref{para-gamma-theta}) are compatible. A straightforward computation shows that  all the other conditions among (\ref{para-theta}), (\ref{para-gamma}) and (\ref{para-gamma-theta}) are also compatible.\\

Now given any $\delta>0$, we now can take $r$ large enough so that
\bg\notag
r^{1-\beta_1-\beta_2}\gtrsim \frac{1}{\delta}. 
\ed
Then (\ref{ineq:blow-up}) and (\ref{small-p}) yield
\begin{equation}\notag
\|u(\cdot, T)\|_{\dot{B}_{\infty,\infty}^{-s}} \gtrsim r^{1-\beta_1-\beta_2}\gtrsim \frac{1}{\delta}.
\end{equation}
Finally, we have 
\bg\notag
\|u_0\|_{\dot{B}_{\infty,\infty}^{-\theta_1}}\lesssim r^{-\beta_1}\lesssim \delta, \qquad
\|b_0\|_{\dot{B}_{\infty,\infty}^{-\theta_2}}\lesssim r^{-\beta_2}\lesssim \delta.
\ed
due to Lemma \ref{le:u0}, which competes the proof of Theorem \ref{Mthm}.

\bigskip

\end{document}